\newtheorem{lemm}{Lemma}[section]
\newtheorem {theo}[lemm]{Theorem}
\newtheorem{prop}[lemm]{Proposition}
 \newcommand{\R}{\mathbb R}
\newcommand{\C}{\mathbb C}
\begin{document}
\title{\bf Mean value property associated with the Dunkl Laplacian}
\author{Kods Hassine\\
{\small\em  Department of Mathematics, Faculty of Sciences, University of Monastir}\\  {\small\em 5019 Monastir, Tunisia }\\
{\small\em E-mail: hassinekods@gmail.com}
}
\date{}
\maketitle

\abstract{Let $\Delta_k$ be the Dunkl Laplacian on $\R^d$. The main goal of this paper is to characterize $\Delta_k$-harmonic functions by means of a mean value property.}

{\small 
\paragraph{\bf{Keywords :}} Dunkl Laplacian,  Mean value property,  $\Delta_k$-harmonic functions.


\paragraph{\bf MSC (2010):}  31A05, 51F15, 42B99.
}

\section{Introduction}
\label{intro}


Let $R$ be a root system of $\R^d$, $d\geq1$,  $k:R\rightarrow \R_+$ be a multiplicity function and   $W$ be the  group generated by the  reflections
$\sigma_\alpha$, $\alpha\in R$.
The Dunkl Laplacian is defined in \cite{dunkl2} for every function $f\in C^2(\R^d)$ by
$$\Delta_kf(x)=\Delta f(x)+2\sum_{\alpha\in R_+}k(\alpha)\left(\frac{<\nabla f(x),\alpha>}{<\alpha,x>}-\frac{|\alpha|^2}2\frac{f(x)-f(\sigma_\alpha(x))}{<\alpha,x>^2}\right),$$
where  $\Delta$ and $\nabla$ denote respectively the usual  Laplacian and gradient on~$\R^d$ and $R_+$ is a positive subsystem of $R$. Clearly, if $k$ is the
identically vanishing function, then $\Delta_k$
is reduced to~$\Delta$.
\par It is well known that a locally bounded function $f$ on an open subset $D$ of~$\R^d$,  is  $\Delta$-harmonic (i.e.,  $f\in C^2(D)$ and $\Delta f=0$ on $D$) if and only if
$$f(x)=\frac{1}{\sigma_{x,r(S(x,r))}}\int_{S(x,r)}f(y)d\sigma_{x,r}(y),$$
 for every $x\in D$ and every $r>0$ such that the closed ball $\overline{B}(x,r)$ of center $x$ and radius $r$ is contained in $D$. Here    $\sigma_{x,r}$ is the  surface area measure on the  sphere $S(x,r)$ with center $x$ and radius $r$.
 \par
H. Mejjaolli and K. Trimèche showed  in \cite{mt1} that every infinitely
 differentiable function $f$  on $\R^d$ is $\Delta_k$-harmonic on $\R^d$ if and only if for all $x\in \R^d$ and $r>0$,
\begin{equation}\label{pmoyenne}
f(x)=\frac{1}{d_k}\int_{S(0,1)}\tau_xf(ry)\left(\prod_{\alpha\in R_+}|\langle y,\alpha\rangle|^{2k(\alpha)}\right)d\sigma_{0,1}(y),
\end{equation}
where $d_k$ is a normalized constant and  $\tau_x$ is the Dunkl translation.
 The main goal of this paper is to  investigate a   mean value property  which characterizes the $\Delta_k$-harmonicity of locally bounded  functions on an open subset of $\R^d$.
\par  Let $D\subset \R^d$ be an open set which is $W$-invariant. We shall say that a function  $f: D\rightarrow \R$  satisfies the mean value property on $D$ if  for every $x\in D$ and $r>0$ such that $\overline B(x,r)\subset D$,
$$f(x)=\int_{\R^d}f(y)d\sigma_{x,r}^k(y),$$
 where $\sigma_{x,r}^k$  (see~\cite{rosler3})  is the  unique probability measure on $\R^d$ such that the right hand side of  (\ref{pmoyenne}) coincides with
$$\int_{\R^d}f(y)d\sigma_{x,r}^k(y).$$
\par We shall prove that every locally bounded function $f$ on $D$ is $\Delta_k$-harmonic if and only if it satisfies the mean value property on $D$.
To that end, we  prove first the equivalence  for infinitely differentiable functions on $D$. Next, we
show   that for  a locally bounded function $f$ on  $D$, if $f$ satisfies the mean value property then
$f$ is infinitely differentiable on $D$. Thus, $f$ is $\Delta_k$-harmonic provided it satisfies the mean value property on $D$. To prove the converse, we need only show that if $f$ is $\Delta_k$-harmonic then it is infinitely differentiable on~$D$. This will be proved once we have shown that the operator $\Delta_k$ is hypoelliptic. Thus, by means of convergence property of $\Delta_k$-harmonic functions,  we prove that the operator~$\Delta_k$ is hypoelliptic on~$D$.
\par Note that the condition that $D$ is $W$-invariant is nearly optimal.
In fact, in the case where $d=1$, for every open set $D\subset \R$ which is not $W$-invariant,  we can always  construct a $\Delta_k$-harmonic function function  $f$ on $D$ which  does not
satisfy the mean value property on~$D$.
\section{Preliminaries and some lemmas}
\label{sec:1}
Let $S(\R^d)$  be the Schwartz space and  $C_0(\R^d)$ be  the set of all continuous functions on $\R^d$ vanishing at infinity.
For every open set $U\subset \R^d$,  $C(U)$ and $C_c(U)$ will denote respectively the set of all continuous functions on $U$ and  the set of all continuous functions  with compact support on $U$. The set of all  bounded functions in $C(U)$  will be denoted by $C_b(U)$.
For every  $\alpha\in \mathbb R^d\backslash\{0\}$, let $H_\alpha$ be the hyperplane of~$\R^d$ orthogonal to $\alpha$ and let $\sigma_\alpha$ be the reflection in $H_\alpha$, i.e.,
$$\sigma_\alpha(x):=x-2\frac{<\alpha,x>}{|\alpha|^2}\alpha,$$
where $\langle x,y\rangle=\sum_{i=1}^d x_iy_i$  and $|x|:=\sqrt{\langle x,x\rangle}.$
A finite subset $R$ of $\mathbb R^d\setminus \{0\}$ is called a \emph{root system} if $R\cap\mathbb R \alpha=\{\pm\alpha\}$ and $\sigma_\alpha(R)=R$ for all $\alpha\in R$.
For a given root system $R$, we denote by $W$  the finite group generated by all  refections $\sigma_\alpha,\, \alpha\in R$.
A function $k: R\rightarrow \mathbb R_+$ is called a \emph{multiplicity function} if it satisfies  $k(w\alpha) = k(\alpha)$,   for every $w\in W$ and every $\alpha\in R$.
\par Throughout this paper we fix  a root system $R$,  a  multiplicity function $k$ and  a \emph{$W$-invariant open} subset $D$ of~$\R^d$, that is,  $w(D)\subset D$ for all $w\in W$.
Let $w_k$ be the \emph{weight function} on $\R^d$  defined by,
$$w_k(x):=\prod_{\alpha\in R_+}|\langle x,\alpha\rangle|^{2k(\alpha)},$$
where  $R_+:=\{\alpha\in R : \langle\alpha,\beta\rangle >0\}$ for  some
$\beta\in \R^d\setminus\cup_{\alpha\in R}H_\alpha$.
Note that $w_k$ is homogeneous of degree $2\gamma$, with $\gamma:=\sum_{\alpha\in R_+}k(\alpha).$
From now on, we assume that $$\lambda:=\gamma+\frac d2-1>0.$$
   {The  Dunkl Laplacian} associated with the root system $R$ and the multiplicity function $k$ is the operator
$$\Delta_k:=\sum_{i=1}^dT_{i}^2,$$
where for every $1\leq i\leq d$ and $f\in C^1(D)$,
$$T_i f(x):=\partial_i f(x)+\sum_{\alpha\in R_+}k(\alpha)\alpha_i\frac{f(x)-f(\sigma_\alpha( x))}{<\alpha,x>},\quad x\in D.$$
\par By \cite{dunkl1}, there exists a unique linear isomorphism $V_k$  from the space of homogenous polynomials of degree $n$
on $\mathbb R^d$ into it self such that  $V_k1=1$ and
$T_i V_k=V_k\partial_i$. Later, it was shown in \cite{trim} that   the intertwining operator $V_k$  has an  homeomorphism  extension to $C^\infty(\R^d)$.
 The positivity of $V_k$ (see \cite{rosler1}) yields the existence of a family of probability measures $(\mu_x^k)_x$ such that for every $x\in\R^d$ and every $f\in C^\infty(\R^d)$,
$$V_kf(x)=\int_{\R^d}f(y)d\mu_x^k(y).$$ The support
of $\mu_x^k$ is contained  in the convex hull $C(x)$ of the orbit of $x$ under the reflection group $W$,
$$C(x):=co\{wx,\quad w\in W\}.$$
\emph{The Dunkl kernel} associated with $R$ and $k$ is defined on $\R^d\times \R^d$ by
$$E_k(x,y):=\int_{\mathbb R^d}e^{\langle y,\xi \rangle}d\mu_x^k(\xi).$$
It is well known that $E_k$ is positive, symmetric  and admits a unique holomorphic extension to $\mathbb C^d\times \mathbb C^d$ satisfying
$E_k(\xi z,\omega)=E_k(z,\xi\omega )$ for every $z,\omega\in\mathbb C^d$ and every
$\xi\in \mathbb C$.
The corresponding  \emph{Dunkl transform}  is then given for every bounded measure $\mu$ on $\R^d$ by
$$\mathcal F_D(\mu)(x):=c_k\int_{\mathbb R^d}E_k(-i\xi,x)d\mu(\xi),\quad x\in \R^d,$$
where $$c_k:=\left(\int_{\mathbb R^d}e^{-\frac{|y|^2}{2}}w_k(y) dy\right)^{-1}.$$
If $\mu=fw_kdx$ where   $f\in S(\R^d)$ and $dx$ is the Lebesgue measure on $\R^d$, then  we shall  write $\mathcal F_D(f)$ instead of $\mathcal F_D(\mu)$.
Note that $\mathcal F_D$ is  injective on the space of all bounded Borel measures $\mathcal M_b(\mathbb R^d)$ on $\R^d$ (see~\cite{rv}) and is a topological isomorphism from $S(\mathbb R^d)$ into it self (see~\cite{dejeu}).
 For each $x\in \R^d$, \emph{the Dunkl translation} $\tau_x$ is defined  for every   $f\in S(\mathbb R^d)$  by
 $$\tau_xf={\mathcal F}_D^{-1}(E_k(ix,\cdot)\mathcal F_Df),$$
 where  ${\mathcal F}_D^{-1}$ denotes the inverse of $\mathcal F_D$ on $S(\R^d)$.
 In \cite{trim2}, this translation was extended to $C^\infty(\R^d)$ by
 $$\tau_xf(y)=\int_{\R^d}\int_{\R^d}V_k^{-1}f(z+\eta)d\mu_x^k(z)d\mu_y^k(\eta),$$ where $V_k^{-1}$ is the inverse of $V_k$ on $C^\infty(\R^d)$.
 It was shown that, for every $f~\in~C^\infty(\R^d)$, the function  $u:(x,y)\mapsto \tau_xf(y)$ is symmetric,  infinitely differentiable on $\R^d\times \R^d$ and
 for every $x,y\in\R^d$,
  \begin{equation}\label{txi}
    (T_i)_x u(x,y)=(T_i)_y u(x,y).
\end{equation}
  Moreover, $\tau_xf(0)=f(0)$, $T_i\tau_xf=\tau_xT_{i}f$ and $\tau_xE_k(z,\cdot)(y)=E_k(x,z)E_k(y,z)$ for every  $z\in \C^d$.
 Further, if the support of $f$ (noted $\mbox{ supp\,}f$) is in $B(0,r)$ for some $r>0$,  then $\mbox{ supp }\,\tau_xf\subset B(0,r+|x|)$.
\par According to \cite{rosler3}, for each $x\in\R^d$ and $r>0$, there exists a unique  probability measure $\sigma_{x,r}^k$ on $\R^d$ which is supported by
$\bigcup_{w\in W} \overline B(wx,r)\setminus B(0,||x|-r|)$ such that for every $f\in C^\infty(\R^d)$,
\begin{equation}\label{hatem}
\frac{1}{d_k}\int_{S(0,1)}\tau_xf(ry)w_k(y)d\sigma_{0,1}(y)=\int_{\mathbb R^d}f(y)d\sigma^k_{x,r}(y).
\end{equation}
where,
$$d_k:=\int_{S(0,1)}w_k(y)d\sigma_{0,1}(y)=\frac{1}{c_k2^{\lambda}\Gamma(\lambda+1)}.$$
\begin{lemm}\label{lemmeelementaire}
Let  $\varphi\in S(\R^d)$ be radial. Then for every Borel set $A\subset \mathbb R^d$ and every
$x\in\mathbb R^d$,
 \begin{equation}\label{eq}\int_A
\tau_{-x}\varphi(y)
w_k(y)dy=d_k\int_0^\infty{\varphi}(t)t^{2\lambda+1}\left(\int_Ad\sigma^k_{x,t}(y)\right)dt.\end{equation}
\end{lemm}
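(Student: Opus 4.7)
The plan is to prove the stated equality first with $\mathbbm 1_A$ replaced by an arbitrary test function $\psi\in S(\R^d)$, and then extend to indicators of Borel sets by density. Fix such $\psi$. Applying~(\ref{hatem}) with $f=\psi$ rewrites the right-hand side of~(\ref{eq}), tested against $\psi$, as
$$\int_0^\infty \varphi(t)\,t^{2\lambda+1}\int_{S(0,1)}\tau_x\psi(t\omega)\,w_k(\omega)\,d\sigma_{0,1}(\omega)\,dt.$$

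The change of variables $z=t\omega$ combined with the homogeneity $w_k(t\omega)=t^{2\gamma}w_k(\omega)$ (so that the Jacobian $t^{d-1}$ and the factor $t^{2\gamma}$ combine to $t^{2\gamma+d-1}=t^{2\lambda+1}$), together with the radiality of $\varphi$, reduces the display above to $\int_{\R^d}\varphi(z)\,\tau_x\psi(z)\,w_k(z)\,dz$. Next I invoke the $w_k\,dz$-adjoint relation
$$\int_{\R^d}\tau_x\psi(z)\,\varphi(z)\,w_k(z)\,dz=\int_{\R^d}\psi(z)\,\tau_{-x}\varphi(z)\,w_k(z)\,dz,$$
which is a standard property of the Dunkl translation (derivable from Parseval for $\mathcal F_D$ and the intertwining identity $\mathcal F_D(\tau_x f)=E_k(ix,\cdot)\mathcal F_D f$). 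The right-hand side is exactly the left-hand side of~(\ref{eq}) tested against $\psi$, so the identity holds for every $\psi\in S(\R^d)$.

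To conclude, observe that the right-hand side of~(\ref{eq}) defines a finite Borel measure in $A$ since $\sigma^k_{x,t}$ is a probability measure and $\varphi(t)t^{2\lambda+1}\in L^1(0,\infty)$, while the left-hand side is a locally finite Borel measure absolutely continuous with respect to $w_k\,dy$. A dominated-convergence argument, based on approximating $\mathbbm 1_A$ by Schwartz functions with uniform pointwise and $L^1$-type control, then promotes the test-function identity to $\psi=\mathbbm 1_A$, yielding~(\ref{eq}). The main input from the literature is the adjoint relation for $\tau_x$ under $w_k\,dz$; the remaining manipulations are Fubini and polar coordinates adapted to the homogeneous weight $w_k$, and I expect the only place requiring real care to be the justification of this adjoint property at the level of Schwartz functions.
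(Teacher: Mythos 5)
Your proof is correct and follows essentially the same route as the paper: both arguments view the two sides of (\ref{eq}) as finite Borel measures, pair them against a separating family of smooth functions, and reduce the comparison to (\ref{hatem}) plus polar coordinates with the homogeneity of $w_k$. The only real difference is packaging: the paper pairs against the Dunkl exponentials $E_k(-iy,\cdot)$ and concludes by injectivity of $\mathcal F_D$ on $\mathcal M_b(\R^d)$, whereas you pair against arbitrary $\psi\in S(\R^d)$ via the adjoint relation for $\tau_x$ (which, specialized to $\psi=E_k(-iy,\cdot)$, is exactly the paper's computation) and conclude by density of Schwartz functions in determining finite measures.
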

\begin{proof}
Let $x\in\mathbb R^d$ and denote by $\mu(A)$ and $\nu(A)$ the left hand side and the right hand side respectively of (\ref{eq}).
Clearly, both  $\mu$ and $\nu$ are   bounded measures on $\R^d$. For every $y\in \R^d$,
\begin{eqnarray*}
{\mathcal F}_D(\mu)(y)&=&{\mathcal F}_D(\tau_{-x}\varphi)(y)\\
&=&{\mathcal F}_D\varphi(y)E_k(-iy,x)\\
&=&c_k E_k(-iy,x)
\int_{\mathbb R^d}\varphi
(z)E_k(-iy,z)w_k(z)dz\\
&=&c_k\int_{\R^d}\tau_{x}E_k(-iy,\cdot)(z)\varphi(z)w_k(z)dz
\end{eqnarray*}
Using  spherical coordinates and (\ref{hatem}), we deduce that,
\begin{eqnarray*}
{\mathcal F}_D(\mu)(y)&=& c_kd_k\int_0^\infty
t^{2\lambda+1}{\varphi}(t)\int_{\R^d}E_k(\xi,-iy)d\sigma_{x,t}^k(\xi)dt\\
&=&{\mathcal F}_D(\nu)(y).
\end{eqnarray*}
Finally,  we use the injectivity of $\mathcal F_D$ on   $\mathcal M_b(\mathbb R^d)$ to
conclude.\end{proof}
\vskip 5mm

 Let $\varphi\in S(\R^d)$ be a  radial function   with  support in $\overline B(0,r)$, $r>0$. We claim that for every $x\in \R^d$,
 \begin{equation}\label{cor}
 \mbox{ supp\,}\tau_x\varphi\subset \bigcup_{w\in W} \overline B(wx,r).
 \end{equation}
Indeed, let $A$ be a Borel subset of $\R^d\setminus\cup_{w\in W} \overline B(wx,r)$. Then by (\ref{eq}),
$$ \int_A\tau_{-x}\varphi(y)
w_k(y)dy=d_k\int_0^r{\varphi}(t)t^{2\lambda+1}\left(\int_Ad\sigma^k_{x,t}(y)\right)dt.$$
Since for every $0<t<r$, $\mbox{{ supp\,} }\sigma_{x,t}^k\subset\cup_{w\in W} \overline B(wx,r)$ we deduce that,
$$\int_A\tau_{-x}\varphi(y)
w_k(y)dy=0.$$
This proves the claim.

 In the sequel we shall write $$M_{x,r}(f)=\int_{\R^d}f(y)d\sigma_{x,r}^k(y),$$ whenever the integral makes sense.
A Borel function $f:D\rightarrow \R$ is said to satisfy  \emph{the mean value property} on $D$ if $M_{x,r}(f)=f(x)$ for every $x\in\R^d$ and $r>0$ such that $\overline B(x,r)\subset D$.

\begin{lemm}\label{moycinfty}
Let $f$ be a locally bounded function on $D$. If $f$ satisfies the mean value property on D, then $f\in C^\infty(D)$.
\end{lemm}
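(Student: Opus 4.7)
The plan is to represent $f$ locally as a ``Dunkl-convolution'' of itself with a smooth compactly supported radial bump, and then to read off smoothness from this representation by differentiating under the integral sign. The mean value hypothesis is exactly what makes this convolution identity hold on the nose.

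First I would fix $x_0 \in D$ and choose $r > 0$ so small that $\overline{B}(x_0, 2r) \subset D$; setting $U := B(x_0, r/2)$, for every $x \in U$ and every $t \in (0, r)$ one has $\overline{B}(x, t) \subset D$, whence $M_{x,t}(f) = f(x)$ by hypothesis. I then pick a radial $\varphi \in C_c^\infty(\R^d)$ with $\mathrm{supp}\,\varphi \subset \overline{B}(0, r)$, normalized so that $d_k \int_0^\infty \varphi(t)\, t^{2\lambda + 1}\, dt = 1$ (with the standard convention of identifying $\varphi$ with its radial profile).

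The identity of Lemma~\ref{lemmeelementaire} says that the signed measure $\tau_{-x}\varphi(y)\, w_k(y)\, dy$ coincides with $d_k \int_0^r \varphi(t)\, t^{2\lambda+1}\, \sigma^k_{x,t}\, dt$. Both sides are concentrated on $\bigcup_{w \in W} \overline{B}(wx, r)$, which is contained in $D$ by $W$-invariance of $D$ and the choice of $r$, and on this compact set $f$ is bounded. Integrating $f$ against this identity of compactly supported measures and invoking the mean value property for each $t \in (0, r)$ then gives
$$f(x) = \int_{\R^d} f(y)\, \tau_{-x}\varphi(y)\, w_k(y)\, dy, \qquad x \in U.$$

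It remains to check that the right hand side above is of class $C^\infty$ in $x$ on $U$. By the preliminaries $(x, y) \mapsto \tau_{-x}\varphi(y)$ is infinitely differentiable on $\R^d \times \R^d$; moreover, since it vanishes in a joint neighborhood of every $(x, y)$ with $y \notin \bigcup_{w} \overline{B}(wx, r)$, so do all its $x$-partial derivatives. Shrinking $U$ slightly so that the compact set $K := \bigcup_{x \in \overline{U},\, w \in W} \overline{B}(wx, r)$ still lies in $D$, each derivative $\partial_x^\alpha \tau_{-x}\varphi(y)$ is bounded on $\overline{U} \times K$ while $f$ is bounded on $K$; dominated convergence then justifies differentiating under the integral to any order, giving $f \in C^\infty(U)$. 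As $x_0$ was arbitrary, $f \in C^\infty(D)$.

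The main obstacle I anticipate is precisely this last step: one must verify that the $x$-derivatives of $\tau_{-x}\varphi(y)$ retain the same $y$-support control as $\tau_{-x}\varphi$ itself, so that the integrand in the differentiated integral still has compact support inside $D$ uniformly for $x$ near $x_0$. This is a mild technicality that follows from joint smoothness of $(x,y) \mapsto \tau_{-x}\varphi(y)$ together with the support statement~(\ref{cor}), but it is the only place where care is required; the rest of the argument is the standard smoothing argument adapted to the Dunkl setting.
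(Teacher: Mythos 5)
Your proposal is correct and follows essentially the same route as the paper: both use Lemma~\ref{lemmeelementaire} to rewrite the Dunkl convolution of $f$ with a radial bump as $d_k\int_0^r\varphi(t)t^{2\lambda+1}M_{x,t}(f)\,dt$, invoke the mean value property to conclude that this mollification reproduces $f$ exactly, and deduce smoothness from the joint smoothness of $(x,y)\mapsto\tau_{-x}\varphi(y)$ together with the support control~(\ref{cor}). The paper merely instantiates your general radial $\varphi$ with the explicit bumps $\phi_n$ of~(\ref{phin}) and works on the sets $D_n$ instead of localizing around a point; the content is identical.
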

\begin{proof} Without loss of generality we suppose that $f$ is bounded on $D$.
 Let $\phi$ be the function defined for every $t\in \R$ by
$\phi(t):=ce^{-\frac 1t}{\chi}_{]0,\infty[}(t),$
 where ${\chi}_{]0,\infty[}$ is the indicator function of $]0,\infty[$ and the constant $c$ is chosen so that
$$cd_k\int_0^1\phi(1-t^2)t^{2\lambda+1}dt=1.$$
For every $n\geq 1$ we define the function $\phi_n$ by,
\begin{equation}\label{phin}
\phi_n(x)= n^{2\lambda+2}\phi(1-n^2|x|^2),\quad x\in\R^d.
\end{equation}
Obviously $\phi_n$ is infinitely differentiable on $\R^d$ with support  in $ \overline B(0,\frac 1n)$. Thus, by (\ref{cor}), for every $x\in\R^d$,
$$\mbox{ supp\,} \tau_{x}\phi_n\subset \cup_{w\in W}\overline B(wx,\frac 1n).$$
Let $D_n:=\{x\in D:\;  \overline B(x,\frac 1n)\subset D\}$ and let
  $$f_n(x):=\int_Df(y)\tau_{-x}\phi_n(y)w_k(y)dy,\quad x\in \R^d.$$   Then $f_n\in C^\infty(D_n)$. On the other hand, it follows from (\ref{eq}) that for every $x\in D_n$,
\begin{eqnarray*}
f_n(x)&=&d_k\int_0^{\frac 1n}{\phi_n}(t)t^{2\lambda+1}M_{x,t}(f)dt=f(x).
\end{eqnarray*}
 Hence   $f\in C^\infty(D_n)$ and consequently  $f\in C^\infty(D)$ as desired.\end{proof}
\vskip 5mm
\section{Main result}
\label{sec:2}
$D$ will always denotes a $W$-invariant open subset of $\R^d$. Our main result is the following:

\begin{theo}\label{promoy}
 Let   $f$ be a locally bounded function on D. The following statements are equivalent:
 \begin{description}
 \item[\it{(a)}] $f\in C^2(D)\mbox{ and }\Delta_k f=0 \mbox{ on }D$.
 \item[\it{(b)}]  $M_{x,r}(f)=f(x)$ for every $x\in D$ and $r>0$ such that $\overline B(x,r)\subset D$ .
\end{description}
\end{theo}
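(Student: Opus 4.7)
The strategy is to reduce both implications to the corresponding statement for smooth functions and then combine it with two regularity results: Lemma~\ref{moycinfty}, which already takes care of the direction ``MVP implies $C^\infty$'', and the hypoellipticity of $\Delta_k$ on $D$, which supplies the analogous statement for $\Delta_k$-harmonic functions. Once both sides of the equivalence are known to be $C^\infty$, the matter reduces to the smooth identity \eqref{pmoyenne}, applied locally via \eqref{hatem}.

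For (b)~$\Rightarrow$~(a), Lemma~\ref{moycinfty} immediately gives $f\in C^\infty(D)$. Fix $x\in D$ and $r_0>0$ with $\overline B(x,r_0)\subset D$; because $D$ is $W$-invariant and each $w\in W$ is an isometry, $\overline B(wx,r_0)\subset D$ for every $w\in W$, so the support of $\sigma_{x,r}^k$ lies in $D$ for $0<r\le r_0$ and the MVP rewrites via \eqref{hatem} as
$$f(x)=\frac{1}{d_k}\int_{S(0,1)}\tau_xf(ry)w_k(y)d\sigma_{0,1}(y),\qquad 0<r\le r_0.$$
Expanding $\tau_xf(ry)$ in powers of $r$ using smoothness of $u(x,y)=\tau_xf(y)$ on $\R^d\times\R^d$, the intertwining identity $(T_i)_xu=(T_i)_yu$ from \eqref{txi}, and the vanishing of $\int_{S(0,1)}y_iw_k(y)\,d\sigma_{0,1}(y)$ by $W$-invariance of $w_k$, the constant term in $r$ equals $f(x)$ and the coefficient of $r^2$ is a positive multiple of $\Delta_kf(x)$. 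Since the left-hand side is independent of $r$, this forces $\Delta_kf(x)=0$.

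For (a)~$\Rightarrow$~(b), the key step is to upgrade $f\in C^2(D)$ with $\Delta_kf=0$ to $f\in C^\infty(D)$ by invoking the hypoellipticity of $\Delta_k$ developed later in the paper. With $f$ smooth, the same expansion shows that $r\mapsto M_{x,r}(f)$ is smooth on $[0,\mathrm{dist}(x,\partial D))$ and solves a second-order ODE of Darboux type whose unique bounded solution with value $f(x)$ at $r=0$ is the constant $f(x)$; equivalently, applying $T_i\tau_x=\tau_xT_i$ twice under the spherical integral and using $\Delta_kf=0$ shows $\partial_rM_{x,r}(f)\equiv 0$. Hence $M_{x,r}(f)=f(x)$ whenever $\overline B(x,r)\subset D$.

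The main obstacle is the hypoellipticity input needed in (a)~$\Rightarrow$~(b). Since $\Delta_k$ is \emph{nonlocal}, its reflection part coupling $f(x)$ with $f(\sigma_\alpha x)$ prevents a direct application of classical elliptic-regularity arguments; the paper handles this by exploiting convergence properties of $\Delta_k$-harmonic functions. A secondary technical point is that \eqref{pmoyenne} is stated globally on $\R^d$, whereas one needs its local version on the $W$-invariant set $D$; this is resolved precisely by the $W$-invariance of $D$, which guarantees that all spheres entering the mean keep their full $W$-orbit inside $D$.
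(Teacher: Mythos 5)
Your overall architecture is exactly the paper's: (b)$\Rightarrow$(a) via Lemma~\ref{moycinfty} plus the smooth-case equivalence, and (a)$\Rightarrow$(b) via hypoellipticity (Theorem~\ref{hypoell}) plus the same smooth-case equivalence. The place where your write-up has a genuine gap is precisely that smooth-case equivalence (the content of Proposition~\ref{propmoycoro}), which is where the real work of the theorem lives. Your justification --- Taylor-expand $\tau_xf(ry)$ in $r$, kill the odd terms by symmetry of $w_k$, and read off ``a positive multiple of $\Delta_kf(x)$'' as the $r^2$ coefficient --- does not go through as stated. The measure $w_k\,d\sigma_{0,1}$ is only $W$-invariant, not rotation-invariant, so the second-moment matrix $\int_{S(0,1)}y_iy_j\,w_k(y)\,d\sigma_{0,1}(y)$ is in general \emph{not} a multiple of the identity (e.g.\ for $A_1\times A_1$ in $\R^2$ with unequal multiplicities), and the resulting quadratic form in $\partial_i\partial_j(\tau_xf)(0)$ cannot be identified with $\Delta_kf(x)$ by moment symmetry alone; moreover $\Delta_k$ carries difference terms that a Euclidean Taylor expansion never sees. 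The mechanism that actually closes this step --- and which is absent from your sketch --- is the Dunkl Green formula \eqref{green}: one differentiates $t\mapsto M_{x,t}(f)$ using \eqref{hatem}, recognizes the result as the flux $\frac{1}{d_kt^{2\lambda+1}}\int_{S(0,t)}\frac{\partial}{\partial n}(\tau_xh)\,w_k\,d\sigma_{0,t}$, converts it by \eqref{green} and $\Delta_k\tau_x=\tau_x\Delta_k$ into $\frac{1}{t^{2\lambda+1}}\int_0^ts^{2\lambda+1}M_{x,s}(\Delta_kh)\,ds$, and then lets $t\to0$ (using weak continuity of $(x,s)\mapsto\sigma^k_{x,s}$) to get both directions at once. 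Your appeal to ``a Darboux-type ODE'' in the (a)$\Rightarrow$(b) direction is morally this identity, but asserting it is not proving it.

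Two smaller but real omissions. First, to invoke Theorem~\ref{hypoell} for an $f$ that is a priori only $C^2$ you must first establish the weak identity $\int_Df\,\Delta_k\varphi\,w_k\,dx=\int_D\Delta_kf\,\varphi\,w_k\,dx$ for $\varphi\in C_c^\infty(D)$; this is the symmetry of $\Delta_k$ with respect to $w_k\,dx$ (the paper cites Dunkl for it), and it is not automatic. Second, $\tau_xf$ is only defined for $f\in C^\infty(\R^d)$, so before using \eqref{hatem} you must replace $f$ by a global $C^\infty$ function $h$ agreeing with $f$ on $\cup_{w\in W}B(wx,r)$, which is legitimate exactly because $\mathrm{supp}\,\sigma^k_{x,s}$ sits in that union; you gesture at the $W$-invariance point but never perform the extension. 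With the Green-formula derivation of \eqref{ddtmd} supplied and these two points made explicit, your proof coincides with the paper's.
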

The following proposition shows the equivalence between {\it{(a)} } and {\it{(b)}} whenever $f$ is infinitely  differentiable on $D$. First, let us recall the Green formula associated with the Dunkl Laplacian (see \cite{mt1}): For every $f\in C^2(\overline B(0,t))$, $t>0$,
\begin{equation}\label{green}
\int_{B(0,t)} \Delta_kf(y)w_k(y)dy=\int_{S(0,t)}\frac{\partial}{\partial n}f(y)w_k(y)d\sigma_{0,t}(y),
\end{equation}
where $\frac{\partial}{\partial n}$ is the partial derivation operator in the direction of the exterior unit normal.
 \begin{prop}\label{propmoycoro}
Assume that $f$ is infinitely differentiable on $D$. Then $f$ is $\Delta_k$-harmonic   on $D$ if and only if $f$ satisfies the mean value property on $D$.
 \end{prop}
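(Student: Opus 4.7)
The plan is to establish the Dunkl analogue of the Darboux identity
\begin{equation}\label{planDarboux}
\partial_r^2M_{x,r}(f)+\frac{2\lambda+1}{r}\partial_r M_{x,r}(f)=M_{x,r}(\Delta_kf),
\end{equation}
valid for every $f\in C^\infty(\R^d)$, and then to deduce the proposition by a cutoff argument that exploits the $W$-invariance of $D$.

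To derive (\ref{planDarboux}), I would start from the representation (\ref{hatem}) of $M_{x,r}(f)$, differentiate in $r$ under the integral, and change variables from $S(0,1)$ to $S(0,r)$ using the $2\gamma$-homogeneity of $w_k$; the resulting integrand becomes the exterior normal derivative of $\tau_xf$ on $S(0,r)$. Applying Green's formula (\ref{green}) to $\tau_xf$ on $B(0,r)$, together with the intertwining $\Delta_k\tau_x=\tau_x\Delta_k$ (two applications of $T_i\tau_x=\tau_xT_i$), yields
$$d_k\,r^{2\lambda+1}\partial_rM_{x,r}(f)=\int_{B(0,r)}\tau_x(\Delta_kf)(\xi)w_k(\xi)d\xi.$$
A second differentiation in $r$, combined with (\ref{hatem}) applied to $\Delta_kf$ to rewrite the surface integral on $S(0,r)$ as $d_k r^{2\lambda+1}M_{x,r}(\Delta_kf)$, then produces (\ref{planDarboux}).

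For general $f\in C^\infty(D)$, I would localize. Fix $x\in D$ and $r<R:=\sup\{s:\overline B(x,s)\subset D\}$. Using the $W$-invariance of $D$, choose a $W$-invariant open neighborhood $U$ of the compact set $\bigcup_{w\in W}\overline B(wx,r)$ with $\overline U\subset D$, and $\tilde f\in C_c^\infty(\R^d)$ with $\tilde f=f$ on $U$. Since $\mathrm{supp}\,\sigma^k_{x,s}\subset U$ for $s\le r$, we have $M_{x,s}(f)=M_{x,s}(\tilde f)$ and, because $U$ is $W$-invariant, also $M_{x,s}(\Delta_kf)=M_{x,s}(\Delta_k\tilde f)$ on that range. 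For (a)$\Rightarrow$(b): $\Delta_kf=0$ makes the right-hand side of (\ref{planDarboux}) (applied to $\tilde f$) vanish, so $s\mapsto M_{x,s}(\tilde f)$ satisfies the Euler equation $F''+\frac{2\lambda+1}{s}F'=0$; its only solution bounded at $0$ is a constant, and $M_{x,0}(\tilde f)=\tilde f(x)=f(x)$ forces $M_{x,r}(f)=f(x)$. For (b)$\Rightarrow$(a): the mean value property kills the left-hand side of (\ref{planDarboux}), so $M_{x,s}(\Delta_kf)=0$ for $s\le r$; letting $s\to0^+$ and using $M_{x,s}(g)\to g(x)$ for continuous $g$ yields $\Delta_kf(x)=0$, and $x\in D$ is arbitrary.

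The main obstacle is the derivation of (\ref{planDarboux}): it requires careful bookkeeping of the $2\gamma$-homogeneity of $w_k$ when rescaling between spheres, and it crucially combines three Dunkl-specific ingredients — the representation (\ref{hatem}), the intertwining relation $\Delta_k\tau_x=\tau_x\Delta_k$ for globally smooth functions, and Green's formula (\ref{green}) for the Dunkl Laplacian. Once (\ref{planDarboux}) is in hand, the cutoff/localization step is essentially routine thanks to the $W$-invariance of $D$.
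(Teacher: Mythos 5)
Your proposal is correct and follows essentially the same route as the paper: reduce to a globally smooth extension via a cutoff on a $W$-invariant neighborhood, differentiate $M_{x,r}(f)$ in $r$ using (\ref{hatem}), homogeneity of $w_k$, Green's formula (\ref{green}) and $\Delta_k\tau_x=\tau_x\Delta_k$, and then conclude from the resulting radial identity together with $\lim_{s\to0}M_{x,s}(g)=g(x)$. The only cosmetic difference is that you differentiate once more to obtain the second-order Darboux/Euler equation, whereas the paper stops at the equivalent first-order integrated form (\ref{ddtmd}).
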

 \begin{proof}
Let $x\in D$ and  $r>0$ such that  $ \overline B(x,r)\subset D$. We claim that
  $t~\mapsto~M_{x,t}(f) $ is derivable on $]0,r[$ and for every $t\in ]0,r[$,
\begin{equation}\label{ddtmd}
\frac
{d}{dt}M_{x,t}(f)=\frac{1}{t^{2\lambda+1}}\int_0^ts^{2\lambda+1}M_{x,s}(\Delta_kf)ds.
\end{equation}
Indeed, since for every $s\in]0,r[$,  the support
 of $\sigma_{x,s}^k$ is contained in $\cup_{w\in W}B(wx,r),$ it suffices to  prove (\ref{ddtmd}) replacing $f$ by a function  $h\in  C^\infty(\R^d)$ such that
 $$h=f \mbox{ on } \cup_{w\in W}B(wx,r).$$
It is easily seen  from (\ref{hatem}) that for every $t\in]0,r[$,
\begin{eqnarray*}
\frac{d}{dt}M_{x,t}(h)&=&\frac{1}{d_k}\int_{S(0,1)}\langle\nabla(\tau_xh)(ty),y\rangle w_k(y)d\sigma_{0,1}(y)\\
&=&\frac{1}{d_kt^{2\lambda+1}}\int_{S(0,t)}\langle\nabla(\tau_xh)(u),\frac
ut\rangle w_k(u)d\sigma_{0,t}(u)\\
&=&\frac{1}{d_kt^{2\lambda+1}}\int_{S(0,t)}\frac{\partial}{\partial
n}(\tau_xh)(u)w_k(u)d\sigma_{0,t}(u).
\end{eqnarray*}
Therefore, by  the Green formula (\ref{green})  and    the fact that $\Delta_k\tau_x=\tau_x \Delta_k$,
\begin{eqnarray*}\frac{d}{dt}M_{x,t}(h)&=&\frac{1}{d_kt^{2\lambda+1}}\int_{B(0,t)}\tau_x(\Delta_k h)(u)w_k(u)du.
\end{eqnarray*}
Hence, using  spherical coordinates we deduce that,
$$\frac{d}{dt}M_{x,t}(h)=\frac{1}{t^{2\lambda+1}}\int_0^ts^{2\lambda+1}M_{x,s}(\Delta_kh)ds.$$
Thus the claim is proved. Now assume that $\Delta_kf=0$ on $D$.  Then for all $t\in]0,r[$, $\frac{d}{dt}M_{x,t}(f)=0$, by  (\ref{ddtmd}).  This yields that
$M_{x,t}(f)= \lim_{s\rightarrow 0}M_{x,s}(f)$. On the other hand, it is known from  \cite{rosler3} that  the map $(x,s)\mapsto \sigma_{x,s}^k$ is continuous with respect to the weak topology on $\mathcal M_b(\R^d)$. Thus,
\begin{equation}\label{limmxr}
\lim_{s\rightarrow 0}M_{x,s}(f)=f(x).
\end{equation}
 Whence $M_{x,t}(f)=f(x)$ which yields the necessity . Conversely, assume that~$f$ satisfies the mean value property on $D$. Then, using (\ref{ddtmd}) we deduce
 that $M_{x,t}(\Delta_kf)=0$ for all $t\in]0,r[$. Letting $t$ tend to 0 we obtain  that
  $\Delta_kf(x)=0$.\end{proof}
\vskip 5mm
We then conclude, in virtue of  Lemma \ref{moycinfty}, that  every  locally bounded function $f$ on $D$ which satisfies the
 mean value property on $D$  is  necessarily $\Delta_k$-harmonic on $D$. The converse statement will be proved in the remainder of this section.
\begin{lemm}\label{theconver}
Let $(h_n)_{n\geq 1}\subset C^\infty(D)$ be a locally uniformly bounded sequence of  $\Delta_k$-harmonic functions on $D$ with pointwise limit $h$. Then $h~\in~ C^\infty(D)$ and $\Delta_kh=0$ on $D$.
\end{lemm}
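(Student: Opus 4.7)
The strategy is to reduce the statement to the two results already established: Proposition \ref{propmoycoro}, which characterizes smooth $\Delta_k$-harmonic functions by the mean value property, and Lemma \ref{moycinfty}, which promotes a locally bounded function with the mean value property to a $C^\infty$ function. Concretely, I would first show that the pointwise limit $h$ satisfies the mean value property on $D$; this is the heart of the argument. Once this is done, Lemma \ref{moycinfty} gives $h\in C^\infty(D)$, and then Proposition \ref{propmoycoro} gives $\Delta_k h=0$.

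To establish the mean value property for $h$, fix $x\in D$ and $r>0$ with $\overline B(x,r)\subset D$. Since each $h_n$ is smooth and $\Delta_k$-harmonic, Proposition \ref{propmoycoro} yields $M_{x,r}(h_n)=h_n(x)$ for every $n$. The goal is to pass to the limit on both sides. The right-hand side converges to $h(x)$ by hypothesis. For the left-hand side, recall that $\sigma_{x,r}^k$ is a probability measure supported in $\bigcup_{w\in W}\overline B(wx,r)$. This is the step that requires the $W$-invariance of $D$: since $\overline B(x,r)\subset D$ and $w(D)\subset D$ for every $w\in W$, we get $\overline B(wx,r)=w(\overline B(x,r))\subset D$, so the support of $\sigma_{x,r}^k$ sits inside a compact subset $K\subset D$ on which the sequence $(h_n)$ is uniformly bounded by hypothesis. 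Dominated convergence with respect to the probability measure $\sigma_{x,r}^k$ then gives $M_{x,r}(h_n)\to M_{x,r}(h)$, and we conclude $M_{x,r}(h)=h(x)$.

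The function $h$ is locally bounded on $D$ (as a pointwise limit of a locally uniformly bounded sequence, $|h|$ is bounded by the same local bound), and we have just shown that it satisfies the mean value property on $D$. Lemma \ref{moycinfty} therefore yields $h\in C^\infty(D)$. Finally, applying Proposition \ref{propmoycoro} in the other direction to the smooth function $h$ satisfying the mean value property delivers $\Delta_k h=0$ on $D$.

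The only delicate point is the passage to the limit under the integral: it is tempting to quote dominated convergence without remark, but one must genuinely use the $W$-invariance of $D$ together with the explicit support description of $\sigma_{x,r}^k$ from \cite{rosler3} to know that the values of $h_n$ needed in the integral lie in a compact subset of $D$. Without this observation one would be forced to assume the $h_n$ are defined or bounded on all of $\R^d$, which is not given.
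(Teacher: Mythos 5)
Your proof is correct and follows essentially the same route as the paper: apply Proposition \ref{propmoycoro} to each $h_n$, pass to the limit by dominated convergence to get the mean value property for $h$, then invoke Lemma \ref{moycinfty} and Proposition \ref{propmoycoro} again. Your extra remark justifying the dominated convergence via the support of $\sigma_{x,r}^k$ and the $W$-invariance of $D$ is a welcome detail the paper leaves implicit.
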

\begin{proof} Let $x\in D$ and let $r>0$ such that $\overline B(x,r)\subset D$.
Since  for every $n\geq 1$  the function $h_n$ is $\Delta_k$-harmonic on $D$, it follows  from  Proposition~\ref{propmoycoro}  that,
$$h_n(x)=\int_{\R^d}h_n(y)d\sigma_{x,r}^k(y).$$
Applying the dominated convergence theorem, we get
$h(x)=M_{x,r}(h)$. Whence~$h$ satisfies the mean value property on $D$ which finishes the proof, by Lemma~\ref{moycinfty} and Proposition~\ref{propmoycoro}.\end{proof}
\vskip 5mm
 \par  Let $g_k$ be  \emph{the fundamental solution of the Dunkl Laplacian}.
 That is, for every $\varphi\in C_c^\infty(\R^d)$,
\begin{equation}\label{solfund}
\int_{\R^d} g_k(y) \Delta_k\varphi(y)w_k(y)dy=-\varphi(0).
\end{equation}
It is well known from \cite{mt1} that,
\begin{equation}\label{defgk}
g_k(y)=c_k\Gamma(\lambda)2^{\lambda-1}|y|^{-2\lambda}.
\end{equation}
\begin{theo}\label{hypoell}
Let $h\in C(D)$ and $f\in C^\infty(D)$. Assume that for every  $\varphi~\in~C_c^\infty(D)$,
$$\int_D h(x) \Delta_k\varphi(x)w_k(x)dx=\int_D f(x)\varphi(x)w_k(x)dx.$$
 Then $h\in C^\infty(D)$.
\end{theo}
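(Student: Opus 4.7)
I would show $\Delta_k$ is hypoelliptic by working locally: near each $x_0 \in D$ I construct a smooth particular solution $F$ of $\Delta_k F = f$, which reduces the problem to showing that a continuous $u$ satisfying $\int u\,\Delta_k\varphi\,w_k = 0$ for every $\varphi \in C_c^\infty(V)$ is automatically smooth. That homogeneous statement I would then settle by Dunkl-mollifying $u$, obtaining genuine smooth $\Delta_k$-harmonic approximants, and appealing to the mean-value machinery (Proposition \ref{propmoycoro} and Lemma \ref{moycinfty}).

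\textbf{The local particular solution $F$.} Fix $x_0 \in D$, pick $r>0$ with $\overline B(x_0,r)\subset D$, set $V := \bigcup_{w\in W} B(wx_0,r)$, and choose a $W$-invariant cutoff $\chi \in C_c^\infty(D)$ equal to $1$ on $\overline V$ (by $W$-averaging a standard cutoff). Define
$$F(x) := -\int_{\R^d} g_k(y)\,\tau_{-x}(\chi f)(y)\,w_k(y)\,dy.$$
Since $\lambda = \gamma + d/2 - 1$, the integrand $g_k w_k$ behaves like $|y|^{2-d}$ near $0$ and is locally integrable, while $\tau_{-x}(\chi f)$ is jointly smooth with compact $y$-support depending continuously on $x$; differentiation under the integral then yields $F \in C^\infty(\R^d)$. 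Using (\ref{solfund}) together with $\Delta_k\tau_{-x} = \tau_{-x}\Delta_k$ and approximating $g_k$ by smooth radial functions supported away from $0$, one obtains $\Delta_k F = \chi f$, hence $\Delta_k F = f$ on $V$. Self-adjointness of $\Delta_k$ with respect to $w_k\,dx$ (consequence of the anti-self-adjointness of each $T_i$, by integration by parts against compactly supported test functions) then gives, for every $\varphi \in C_c^\infty(V)$,
$$\int (h-F)\,\Delta_k\varphi\,w_k = \int f\varphi\,w_k - \int \Delta_k F\cdot\varphi\,w_k = 0,$$
so $u := h - F \in C(V)$ is weakly $\Delta_k$-harmonic on $V$.

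\textbf{Smoothness of $u$.} Set $V_n := \{x \in V : \overline B(x,1/n)\subset V\}$ and $u_n(x) := \int u(y)\,\tau_{-x}\phi_n(y)\,w_k(y)\,dy$ for $x \in V_n$, using $\phi_n$ from Lemma \ref{moycinfty}. By (\ref{cor}) and the $W$-invariance of $V$, $\tau_{-x}\phi_n \in C_c^\infty(V)$; plugging it into $\int u\,\Delta_k\varphi\,w_k = 0$ and twice applying $\Delta_k\tau_{-x} = \tau_{-x}\Delta_k$ together with the symmetry $\tau_x g(y)=\tau_y g(x)$ (to transfer the Laplacian from the $y$-variable to the $x$-argument of $u_n$, using that $\Delta_k$ commutes with $x\mapsto -x$) yields $\Delta_k u_n \equiv 0$ on $V_n$. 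Thus each $u_n \in C^\infty(V_n)$ is $\Delta_k$-harmonic, and Proposition \ref{propmoycoro} gives $u_n(x) = M_{x,r}(u_n)$ whenever $\overline B(x,r)\subset V_n$. By Lemma \ref{lemmeelementaire} and (\ref{limmxr}), $u_n \to u$ locally uniformly on $V$, so dominated convergence yields $u(x) = M_{x,r}(u)$ for every $x \in V$ and $\overline B(x,r)\subset V$. Lemma \ref{moycinfty} then delivers $u \in C^\infty(V)$, whence $h = u + F \in C^\infty(V)$; since $x_0 \in D$ was arbitrary, $h \in C^\infty(D)$.

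\textbf{Main obstacle.} The technical heart is the construction of $F$: producing a smooth particular solution in spite of the $|y|^{-2\lambda}$ singularity of $g_k$ and rigorously deriving $\Delta_k F = \chi f$. I would handle this by regularising $g_k$ with a radial $W$-invariant cutoff that removes a small ball around $0$, computing $\Delta_k$ inside the integral for the regularised potential (legitimate now that the kernel is smooth) using the commutation rules, and then invoking (\ref{solfund}) in the limit as the cutoff shrinks. Once Step 1 is in place, the remainder is a clean reuse of the Dunkl-mollification machinery of Lemma \ref{moycinfty}.
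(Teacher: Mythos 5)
Your proposal is correct and follows essentially the same two-step strategy as the paper: reduce to the homogeneous case by subtracting a smooth Dunkl--Newtonian potential $\int g_k(y)\tau_{\cdot}(\chi f)(y)w_k(y)\,dy$ of a cut-off of $f$, then mollify the weakly $\Delta_k$-harmonic remainder with $\tau_{-x}\phi_n$, use (\ref{txi}) and the weak equation to see the mollifications are genuinely $\Delta_k$-harmonic, and pass to the limit via the mean-value machinery of Lemma \ref{lemmeelementaire}, Proposition \ref{propmoycoro} and Lemma \ref{moycinfty}. The only differences are cosmetic (order of the two steps, working on $\bigcup_{w}B(wx_0,r)$ rather than a general $W$-invariant $U\Subset D$, and invoking locally uniform convergence where the paper uses pointwise convergence plus Lemma \ref{theconver}).
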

\begin{proof} It suffices to prove that $h\in C^\infty(U)$, for every   $W$-~invariant open set $U$ such that $\overline U\subset D$.

\par{\it{Step 1.}} Assume first that $f=0$ on $D$. Choose $n_0\geq 1$ such that for every $x\in U$, $\overline B(x,\frac 1{n_0})\subset D$. For every $n\geq n_0$,
let   $\phi_n$ be  as in  (\ref{phin}). Then,   the function $h_n$ defined on $U$ by
$$h_n(x):=\int_{D} h(y)\tau_{-x}\phi_n(y)w_k(y)dy,$$
is infinitely differentiable on $U$ and by (\ref{txi}) for every $x\in U$,
$$\Delta_kh_n=\int_{D}h(y)\Delta_k(\tau_{-x}\varphi_n)(y)w_k(y)dy.$$ On the other hand, it follows from (\ref{eq}) that for every $x\in U$,
\begin{eqnarray*}
h_n(x)&=&d_k\int_0^{\frac 1n}{\phi_n}(t)t^{2\lambda+1}M_{x,t}(h)dt\\
&=&cd_k\int_0^1\phi(1-u^2)u^{2\lambda+1}M_{x,\frac un}(h)du.
\end{eqnarray*}
This yields that $(h_n)_{n\geq n_0}$  is uniformly bounded on $U$   and  converges pointwise  to $h$ on $U$.
Hence, in view of  Lemma~\ref{theconver}, $h\in C^\infty(U)$ and $\Delta_kh=0$ on $U$.
\par{\it{Step 2.}} We now turn to the general case where $f$ is not trivial.
Let $v~\in~C^\infty_c(\R^d)$ such that $v=f$ on $U$ and define $\psi$ on $\R^d$ by
$$\psi(x):=\int_{\R^d} g_k(y)\tau_xv(y)w_k(y)dy,$$
where $g_k$ is given by (\ref{defgk}).
  Using  spherical coordinates, it easily seen that  the function $g_k w_k$ is locally Lebesgue  integrable on $\R^d$. Thus, $\psi~\in~C^\infty(\R^d)$. Furthermore, it follows from (\ref{txi}) and (\ref{solfund}) that
$\Delta_k\psi= -f$ on $U$.
 Then, for every $\varphi\in C^\infty_c(U)$,
$$
  \int_{\R^d} (h(x)+\psi(x))\Delta_k\varphi(x)w_k(x)dx = \int_{\R^d} (f(x)+\Delta_k\psi(x))\varphi(x)w_k(x)dx = 0.
$$
 Whence, the first step  yields that $h+\psi$ is infinitely differentiable on $U$ which finishes  the proof.\end{proof}
 \vskip 5mm

We note that  the previous theorem was already proved by H.~Mejjaolli and K.~Trimèche \cite{mt2} using Sobolev spaces  associated with the Dunkl operators.\\

{\it{Proof of Theorem \ref{promoy} }}
  Statement $\it{(a)}$ follows from $\it{(b)}$ by means of Lemma~\ref{moycinfty} and Proposition~\ref{propmoycoro}. Assume now that $\it{(a)}$ holds.
Then, by \cite{dunkl3}, for every $\varphi~\in~C_c^\infty(D)$,
$$\int_D f(x) \Delta_k\varphi(x)w_k(x)dx=\int_{D} \Delta_k f(x)\varphi(x)w_k(x)dx=0.$$

Use now  Theorem~\ref{hypoell} and Proposition~\ref{propmoycoro} to finish the proof.\hfill $\Box$
\vskip 5mm

\par In the following we shall give a counterexample proving that Theorem~\ref{promoy} does not hold true if the  open set $D$ is  not $W$-invariant. To that end, let $d=1$ and  consider the root system $R=~\{\pm \sqrt2\}$.
 Then, the corresponding reflection group is given by $W=\{\pm id_{\R}\}$. Therfore, an open set    $U\subset  \R$ is $W$-invariant if and only if it is symmetric.
 \begin{prop}
 For every non symmetric open set  $U\subset\R$
 there  exists a  function $h:\R\rightarrow \R$ which is $\Delta_k$-harmonic   on $U$ but  does not satisfy
the mean value property on $U$.
\end{prop}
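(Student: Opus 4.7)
The plan is to construct, for any non-symmetric open $U \subset \mathbb{R}$, an explicit function $h$ that is $\Delta_k$-harmonic on $U$ yet fails the mean value property at a chosen point of $U$.

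Since $U$ is non-symmetric, there is some $a \in U$ with $-a \notin U$. Composing with $x \mapsto -x$ if necessary (this symmetry preserves the entire Dunkl setup), I may assume $a > 0$, and then pick $r \in (0, a)$ small enough that $\overline{B}(a, r) \subset U$ and $\overline{B}(-a, r) \cap U = \emptyset$; in particular $\overline{B}(a, r) \subset (0, \infty)$.

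On $(0, \infty)$, write $\Delta_k h(x) = L_k h(x) + (k/x^2) h(-x)$ with $L_k u := u'' + (2k/x) u' - (k/x^2) u$. The homogeneous equation $L_k u = 0$ is an Euler ODE with characteristic equation $\alpha^2 + (2k-1)\alpha - k = 0$; its discriminant $4k^2 + 1$ is positive, so there are two distinct real roots $\alpha_1 \neq \alpha_2$, and $x^{\alpha_1}, x^{\alpha_2}$ are fundamental solutions. I then define $h : \mathbb{R} \to \mathbb{R}$ so that $h(x) = x^{\alpha_1}$ on $\overline{B}(a, r)$, $h \equiv 0$ on $\overline{B}(-a, r)$, and $\Delta_k h = 0$ on the whole of $U$. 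These local values already force $\Delta_k h = 0$ on $\overline{B}(a, r)$ since $L_k(x^{\alpha_1}) = 0$ and $h(-x) = 0$ there. In the easy case $U \subset (0, \infty)$ one simply sets $h(x) = x^{\alpha_1}$ on $U$ and $h \equiv 0$ outside $U$; for general $U$ one extends $h$ to the rest of $U$ by solving the coupled linear ODE $L_k h(x) = -(k/x^2) h(-x)$ on $U$, treating the free data on $(-U) \setminus U$ as zero.

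To show MVP fails at $(a, r)$, I argue by contradiction: assume $M_{a, t}(h) = h(a)$ for every $t \in (0, r]$. Pick $\tilde h \in C^\infty(\mathbb{R})$ coinciding with $h$ on $B(a, r) \cup B(-a, r)$. Since the support of $\sigma_{a, t}^k$ lies in $\overline{B}(a, t) \cup \overline{B}(-a, t)$, one has $M_{a, t}(h) = M_{a, t}(\tilde h)$, so $t \mapsto M_{a, t}(\tilde h)$ is constant. Applying the derivative identity (\ref{ddtmd}) from the proof of Proposition~\ref{propmoycoro} to $\tilde h$ and differentiating again in $t$ forces $M_{a, s}(\Delta_k \tilde h) = 0$ for every $s \in (0, r]$. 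A direct computation on $B(-a, r)$, using $\tilde h \equiv 0$ there and $\tilde h(-y) = (-y)^{\alpha_1}$ for $y \in B(-a, r)$, yields
\[
\Delta_k \tilde h(y) = -\frac{k}{y^2}\bigl(0 - (-y)^{\alpha_1}\bigr) = k(-y)^{\alpha_1 - 2} > 0,
\]
while $\Delta_k \tilde h \equiv 0$ on $B(a, r)$ by the defining property of $x^{\alpha_1}$. The support description of $\sigma_{a, s}^k$ recalled from \cite{rosler3} ensures that the restriction of $\sigma_{a, s}^k$ to $\overline{B}(-a, s)$ has positive mass for every $s \in (0, a]$, so $M_{a, s}(\Delta_k \tilde h)$ is the integral of a strictly positive function against a nontrivial positive measure, hence strictly positive---the required contradiction.

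The main obstacle I anticipate is the global construction of $h$ on $U$ when $U$ has portions on both sides of the origin: the Dunkl equation on $U$ couples $h(x)$ with $h(-x)$, so the local prescriptions on $\overline{B}(\pm a, r)$ must be matched to a consistent $\Delta_k$-harmonic extension on the remainder of $U$. The case $U \subset (0, \infty)$ (or its mirror) is immediate, and for general $U$ a piecewise ODE argument on the connected components of $U$ and of $-U$ handles the coupling.
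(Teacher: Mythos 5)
Your argument for \emph{why} the mean value property fails is essentially the paper's: both reduce to showing $M_{a,s}(\Delta_k\tilde h)>0$ via the derivative formula (\ref{ddtmd}) and the fact that $\sigma^k_{a,s}$ charges $\overline B(-a,s)$, and that computation of yours ($\Delta_k\tilde h(y)=k(-y)^{\alpha_1-2}>0$ on $B(-a,r)$ because $\tilde h$ vanishes there while $\tilde h(-y)=(-y)^{\alpha_1}$) is correct. The genuine gap is in the \emph{construction} of $h$ for general $U$. When $U$ and $-U$ overlap, the requirement $h\in C^2(U)$ forces $y\mapsto h(-y)$ to be continuous on $U$, since $h''(x)=-\tfrac{2k}{x}h'(x)+\tfrac{k}{x^2}\bigl(h(x)-h(-x)\bigr)$ there. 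Your prescription ``free data on $(-U)\setminus U$ equal to zero'' generically destroys this: if $x_0\in U$ is such that $-x_0\in\partial U$, the ODE solution on the component of $U$ containing $-x_0$ need not tend to $0$ at $-x_0$, so $h(-\cdot)$, hence $h''$, jumps inside $U$ at $x_0$ and $\Delta_k h$ is not defined classically there. In addition, the coupled system on $U\cap(-U)$ must be solved consistently with the data already imposed on $\overline B(a,r)$ (the even/odd decoupling you allude to does not automatically respect that one-sided prescription), and the Euler operator $L_k$ is singular at $0$, so components of $U$ containing the origin are not covered by your piecewise-ODE scheme. Only the case $\overline U\subset(0,\infty)$ (or its mirror) is actually proved by your argument.

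The paper avoids all of this by never solving an ODE: it takes $f\in C_c^\infty(\R)$ with $f=0$ on $U$ and $f=-1$ on $\overline I_{-a,r}$ and sets $h(z)=\int_\R g_k(y)\tau_zf(y)w_k(y)\,dy$ with $g_k$ the fundamental solution (\ref{defgk}); then $h\in C^\infty(\R)$ globally, $\Delta_k h=-f$ by (\ref{solfund}), so $h$ is $\Delta_k$-harmonic on $U$ with no matching conditions to check, and the same positivity argument applies with $M_{a,t}(\Delta_kh)=\sigma^k_{a,t}(\overline I_{-a,t})>0$. You could repair your proof by replacing your piecewise construction with this potential-theoretic one, which is available from the earlier part of the paper. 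One further point, which you share with the paper and which I therefore do not hold against you: the initial selection of $a\in U$ and $r>0$ with $\overline B(-a,r)$ disjoint from $U$ (resp.\ from $\overline U$) is not possible for every non-symmetric open $U$ (e.g.\ $U=(-1,1)\setminus\{1/2\}$), so the statement as literally formulated needs either this mild extra hypothesis or a different choice of test point.
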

\begin{proof}
To abbreviate the notation we write $I_{x,r}:=]x-r,x+r[$. Let $x\in U$ and $r>0$ such that $\overline I_{x,r}\subset U$  and $\overline I_{-x,r}\cap \overline{U}
=\emptyset$.
Choose $f\in C_c^\infty(\mathbb R^d)$ such that
$f=-1$ on $\overline I_{-x,r}$ and  $f=0$ on $U$.  Since $g_kw_k$ is locally Lebesgue  integrable, we deduce that  the function $h$ defined
on $\R$  by
$$h(z)=\int_{\mathbb R}g_k(y)\tau_{z}f(y) w_k(y) dy,$$
is infinitely differentiable on $\R$.  Moreover, by (\ref{solfund}), for every $z\in\R$, $$\Delta_kh(z)=-\tau_{z}f(0)=-f(z).$$ Hence, $\Delta_kh=0$ on $U$.
On the other hand, for every $t\in ]0,r[$,
$$M_{x,t}(\Delta_kh)=-\int_\R f(y)d\sigma^k_{x,t}(y)=\sigma_{x,t}^k(\overline I_{-x,t}).$$
Moreover, it follows from \cite[Remarks 4.2]{rosler3} that
 $$\mbox{ supp\, }\sigma_{x,t}^k= \overline I_{x,t}\cup \overline I_{-x,t}.$$
Thus $\sigma_{x,t}^k(\overline I_{-x,t})>0$ and consequently $M_{x,t}(\Delta_kh)>0$. Whence, by (\ref{ddtmd})  the function $t\mapsto\frac {d}{dt}M_{x,t}(h)$ is
positive on $]0,r[$. Hence, $M_{x,t}(h)\neq h(x)$ for every $t\in]0,r[$, which means that  $h$ does not satisfy the mean value property on~$U$.\end{proof}
\vskip 5mm


\bibliographystyle{elsarticle-num}



\end{document}